\makeatletter\@addtoreset{chapter}{part}\makeatother
\newcommand{\xdownarrow}[1]{%
  {\left\downarrow\vbox to #1{}\right.\kern-\nulldelimiterspace}
}
\begin{document}

\title{Leveled Sub-cohomology}

 \author{B. Wang\\
\begin{CJK}{UTF8}{gbsn}
(汪      镔)
\end{CJK}}

\date {}

\maketitle
 
\begin{abstract}
In this paper we define a functor-- leveled sub-cohomology.
(It bears no relation with the level of elliptic curves). 
It is based on  leveled cycles on a smooth projective variety, and 
 will be expected to reveal a  structure in the level.

\end{abstract}

\newcommand{\hookuparrow}{\mathrel{\rotatebox[origin=c]{90}{$\hookrightarrow$}}}
\newcommand{\hookdownarrow}{\mathrel{\rotatebox[origin=c]{-90}{$\hookrightarrow$}}}
\newcommand\ddaaux{\rotatebox[origin=c]{-90}{\scalebox{0.70}{$\dashrightarrow$}}} 
\newcommand\dashdownarrow{\mathrel{\text{\ddaaux}}}

\newtheorem{thm}{Theorem}[section]

\newtheorem{ass}[thm]{\bf {Claim} }
\newtheorem{prop}[thm]{\bf {Property } }
\newtheorem{prodef}[thm]{\bf {Proposition and Definition } }
\newtheorem{construction}[thm]{\bf {Main construction } }
\newtheorem{assumption}[thm]{\bf {Assumption} }
\newtheorem{proposition}[thm]{\bf {Proposition} }
\newtheorem{theorem}[thm]{\bf {Theorem} }
\newtheorem{apd}[thm]{\bf {Algebraic Poincar\'e duality} }
\newtheorem{cond}[thm]{\bf {Condition} }
\newtheorem{ex}[thm]{\bf Example}
\newtheorem{corollary}[thm]{\bf Corollary}
\newtheorem{definition}[thm]{\bf Definition}
\newtheorem{lemma}[thm]{\bf Lemma}
\newtheorem{con}[thm]{C}
\newtheorem{conj}[thm]{\bf Conjecture}

\bigskip

\bigskip

\bigskip

\maketitle

\bigskip

\tableofcontents

\section{Introduction}

Let $X$ be a smooth projective variety over the complex numbers.
The total Betti cohomology group $$H(X;\mathbb Q)=\sum_i H^i(X;\mathbb Q)$$
over $\mathbb Q$  is a $\mathbb Q$ linear space.
There are many subgroups such as the convineau filtration  ([5])
$$N^p H^q (X)\subset H^q(X;\mathbb Q),$$ 
sub-Hodge structures ([2])
$$ L^p H^q(X)\subset  H^q(X;\mathbb Q),$$ and Hodge filtrations 
 
$$F^pH^q(X)\subset H^q(X;\mathbb C)$$
over $\mathbb C$, 
 etc. 
They are all functorial on the category $SmProj/\mathbb C$ of smooth projective varieties over $\mathbb C$. 
 In this paper we are going to re-group them, so that
a symmetry induced by the Poincar\'e duality will emerge. At the meantime they become 
functorial not only on the category $Corr^0(\mathbb C)$ which includes $SmProj/\mathbb C$, but  also on a further category.

 \bigskip

More precisely we are going to axiomatize a sub-cohomology 
$$\mathcal H_k(X)$$ of $H(X;\mathbb Q)$. 
Then for  two leveled sub-cohomologies $\mathcal H_k(X), \mathcal J_{k'}(X)$, we give a sufficient  condition for
the intersection number  pairing  between  them to be non-degenerate. The non-degeneracy will imply a duality between them.
This is the symmetry  mentioned above. It is called the algebraic Poincar\'e duality, abbreviated as APD.  The primary targets are 
two non-trivial examples. They are \par
 (1) algebraically leveled  filtration  $\mathcal N_k(X)$ of totoal cohomology $H(X)$ at \par \hspace{1cc} level $k$, \par
 (2) Hodge leveled filtration  $\mathcal M_k(X)$  of totoal cohomology $H(X)$ at level  \par \hspace{1cc} $k$.
\par
 
They are flitrations over $\mathbb Q$ for the total cohomology $H(X)$. 
Briefly  $\mathcal N_k(X)$ is defined to be the linear span of 
all cohomology classes $\alpha\in H(X;\mathbb Q)$  supported on  an algebraic set of dimension at most
$[{k+dim_{\mathbb R}(\alpha)\over 2}]$, and $\mathcal M_k(X)$ is defined to be the linear span 
of $\mathbb Q$-subsapces of all sub-Hodge structures of level $k$. 
It is known that they form two ascending filtrations on 
$H(X;\mathbb Q)$
\begin{equation}
\mathcal N_0(X)\subset \mathcal N_1(X)\subset \cdots\subset 
H(X;\mathbb Q).\end{equation}

\begin{equation}
\mathcal M_0(X)\subset \mathcal M_1(X)\subset \cdots\subset 
H(X;\mathbb Q)\end{equation}
and 
$$\mathcal N_k(X)\subset \mathcal M_k(X).$$

In this paper we initiate a study of a duality among the leveled sub-cohomology which
include\par
 (a) APD1, a self duality within $\mathcal N_k$, \par
(b) APD2, a duality between $\mathcal N_k$ and $\mathcal M_k$, \par
(c) APD3, a self duality within $\mathcal M_k$. \par

  \bigskip

\bigskip

\section{Functor of leveled sub-cohomology}

\bigskip

\begin{definition} (Double functor)
Let $\mathcal W$ be a category and $\mathbf A$ be another category.
Let \begin{equation}\begin{array}{ccc}
\eta: \mathcal W&\rightarrow & \mathbf A
\end{array}\end{equation}
be a map equipped with two functors,  covariant $\eta_1$ and contravariant $\eta_2$.
We call $\eta$  a double functor.

\end{definition}
\bigskip

For the convenience, without a further explanation, we  use $X$ to denote a smooth projective variety of dimension $n$ over $\mathbb C$.
\bigskip

\begin{definition}
Let $Corr(\mathbb C)$ be a category, 
\par
(a) whose objects are smooth projective varieties over $\mathbb C$, \par
(b) whose morphisms from $X\to Y$ are rational correspondences $$\langle Z\rangle \in CH(X\times Y;\mathbb Q)$$\par

(c) whose compositions are the compositions of correspondences.  
\end{definition}

It is easy to check the graph of identity map is the identity of the category and the associativity of correspondence is the
associativity of the morphism.  This is not the $Corr^0(\mathbb C)$ from the Chow motives, nor $Cor(\mathbb C)$ of finite 
correspondences ([6]). 
\bigskip

\begin{definition}  Let $H(\cdot; \mathbb Q)$ the Betti cohomology of a smooth variety over $\mathbb C$.
We define a double functor, also denoted by  $H(\cdot; \mathbb Q)$ on $Corr(\mathbb C)$ as follows.\par

(a)
\begin{equation}\begin{array}{ccc}
Corr(\mathbb C) & \rightarrow & Linear\ spaces/ \mathbb Q.\\
X &\rightarrow & H(X; \mathbb Q)
\end{array}
\end{equation}

\par
(b) For any morphism $\langle Z\rangle \in CH(X\times Y;\mathbb Q)$ where $Z$ is an algebraic cycle
in $X\times Y$,  
we let  $P_X, P_Y$ be the projections from $X\times Y$ to $X$, $Y$ respectively. 
Then there is a morphism, 
\begin{equation}\begin{array}{ccc}
H(Y;\mathbb Q) & \rightarrow H(X;\mathbb Q) 
\end{array}\end{equation}
defined by
$$\langle Z\rangle ^\ast (\alpha)= (P_X)_\ast ((1\otimes \alpha)\cup \langle Z\rangle )$$
where $(P_X)_\ast$ is the integration along the fibre (because $P_X$ is a flat morphism). Notice
$(P_X)_\ast$ coincides with the Gysin homomorphism $(P_X)_!$ induced by $P_X$. This is the contravariant functor on  $H(X; \mathbb Q)$.
Similarly we define another morphism
\begin{equation}\begin{array}{ccc}
H(X;\mathbb Q) & \rightarrow H(Y;\mathbb Q) 
\end{array}\end{equation}
 by
$$\langle Z\rangle_\ast (\alpha)= (P_Y)_\ast ((\alpha\otimes 1)\cup \langle Z\rangle ).$$
This is the covariant functor.  Thus the cohomology $H(\cdot;\mathbb Q)$ is a double functor.
These two functors on the cohomology usually are not inverse to each other. They operate on different degrees.

\end{definition}
\bigskip

{\bf Remark} Double functor here is the union of two functors on the same object. 
The push-forward $\langle Z\rangle_\ast$ is just the pull-back with the transpose, 
$(\langle  Z\rangle^t)^\ast$.  
\bigskip

\bigskip

Notice the cohomology $H(\cdot; \mathbb Q)$ is commonly known as a contravariant functor on the different category $SmProj/\mathbb C$,
the smooth projective varieties over $\mathbb C$, 
\begin{equation}\begin{array}{ccc}
SmProj/\mathbb C & \rightarrow & Linear\ spaces/ \mathbb Q.\\
X &\rightarrow & H(X; \mathbb Q)
\end{array}
\end{equation}
If coupled with Gysin homomorphism,  it is also a double functor. 

\bigskip

 In the following we  define a sub-functor of the cohomology $H(\cdot;\mathbb Q)$. 

\begin{equation}\begin{array}{ccc}
Corr(\mathbb C) & \rightarrow & Linear\ spaces/ \mathbb Q.\end{array}
\end{equation}
where $Corr(\mathbb C)$ is the category of correspendences.

\bigskip

\begin{definition} 
Let $k$ be a whole number. 
A double functor
\begin{equation}\begin{array}{ccc}
\mathcal H_k(\cdot): Corr(\mathbb C) &\rightarrow &   Linear\ spaces/\mathbb Q
\end{array}\end{equation}
is called  a sub-cohomology leveled at $k$   if it satisfies \par

(1) \begin{equation} \mathcal H_k(\cdot)\subset H(\cdot; \mathbb Q).
\end{equation}
and it is a sub-double functor of $H(\cdot; \mathbb Q).$
\smallskip

(2)  For $X$ with $n< k$ where $n=dim_{\mathbb C}(X)$, 
\begin{equation}
\mathcal H_k(X)=0.
\end{equation}

For $X$ with $n\geq k$, 

\begin{equation}
\mathcal H_k(X)\subset \sum_{r=0}^{r=n-k} H^{2r+k}(X; \mathbb Q).
\end{equation}

\smallskip

(3) For each $X$,   \begin{equation}
\mathcal H_k(X)\cap \sum_{r\in [0, k]\cup  [2n-k, 2n]} H^r(X;\mathbb Q)
=\sum_{r\in [0, k]\cup  [2n-k, 2n]} H^r(X;\mathbb Q).
\end{equation}

\smallskip

(4) For $X, Y$ in $Corr(\mathbb C)$, 
$$\mathcal H_{k}(X)\otimes _\mathbb Q\mathcal H_{k'}(Y)\subset \mathcal H_{k+k'}(X\times Y).$$

\bigskip

The morphisms are the restrictions of the double functor on $H(\cdot; \mathbb Q)$.

A cohomology class in $\mathcal H_k(X)$, or its representative will be called an $\mathcal H_k$ leveled cycle (or class). \smallskip

\end{definition}

\bigskip

{\bf Remark}
The word ``level" is due to the condition (3).   An equivalent notion is the coniveau. 
However the coniveau will not reaveal  the  duality, called algebraic Poincar\'e duality  defined below.

\bigskip

For the convenience, we  let $u$ be a map from the $SmProj/\mathbb C$ to
$H^2(\cdot ;\mathbb Q)$  satisfying that 
$u(X)$ is a line in  $H^2(;\mathbb Q)$ generated by a very ample divisor. 
Use $u^i$ for the linear map
\begin{equation}\begin{array}{ccc}
H^{\bullet}(X;\mathbb Q) & \rightarrow & H^{\bullet+2i}(X; \mathbb Q)\\
\alpha &\rightarrow & \alpha \cup u^i.\end{array}
\end{equation}

In the context, we use the same notation $u^i$ to denote its restrictions. Use
$V$ to denote the generic hyperplane section that represents the class $u$. 
However $u$ is not a functor. \bigskip

\begin{definition}
Let $\mathcal H_k$ be a leveled sub-cohomology. 

For any $X\in Corr(\mathbb C)$,  primitive leveled sub-cohomology is defined to be

\begin{equation}\begin{array}{c}
\mathcal H_{k, prim}(X)=\mathcal H_k(X)\cap \biggl( \sum_{p\leq n} H^p_{prim}(X;\mathbb Q)+
\sum_{p>n} u^{2p-n}H^p_{prim}(X;\mathbb Q)\biggr). \end{array}
\end{equation}

We'll denote $$\sum_{p\leq n}H^p_{prim}(X;\mathbb Q)+\sum_{p>n} u^{p-n}H^{2n-p}_{prim}(X;\mathbb Q)$$
by $$H_{prim}(X;\mathbb Q).$$
(Both are not functors of $Corr(\mathbb C)$). 
\end{definition}

{\bf Remark} 
Notice  cycles in $\mathcal H_{k, prim}(X)$ for $p> {n}$ are not the conventional primitive cycles.

\bigskip

\begin{definition} Algebraic Poincar\'e duality (APD)
\par
(a) Let $\mathcal H_k, \mathcal J_k$ be two leveled sub-cohomology functors. 
For each $X$, 
if the intersection pairing on
\begin{equation}
\mathcal H_k(X)\times  \mathcal J_k(X).
\end{equation}
is a perfect pairing. We say the algebraic Poincar\'e duality, abbreviated as APD,  holds on these two leveled sub-cohomology functors.
By the Poincar\'e duality this pairing has to be between

\begin{equation}
(\mathcal H_k(X)\cap H^i(X;\mathbb Q))\times  (\mathcal J_k(X)\cap H^{2n-i}(X;\mathbb Q)).
\end{equation}

(b) If the intersection pairing on
\begin{equation}
\mathcal H_{k, prim}(X)\times  \mathcal J_{k, prim}(X).
\end{equation}
is a perfect pairing, we say the primitive APD on $\mathcal H_k, \mathcal J_k$ holds.

\end{definition}

\section{Convineau Filtration--the first example}\par
Algebraically leveled  filtration is a filtration re-grouped from the coniveau filtration. While we review 
the well-known defintions, we'll give another 
description using currents.  Recall that in [4], Grothendieck created a filtration $Filt'{^p}$, 
 called ``Arithmetic filtration, as it embodies
deep arithmetic properties of the scheme ". 
   This later was referred to as the coniveau filtration.
$$N^pH^{2p+k}(X)=\mathcal N_k(X)\cap H^{2p+k}(X;\mathbb Q).$$ 

It is defined as a linear span of kernels of the linear maps
\begin{equation}\begin{array}{ccc}
H^{2p+k}(X;\mathbb Q) &\rightarrow &H^{2p+k}(X- W;\mathbb Q)
\end{array} \end{equation}
for a subvariety  $W$ of codimension at least $p$. 
This is the cohomological view. 
In the same paper, Grothendieck  immediately interpreted it 
as a linear span of images of Gysin homomorphisms
\begin{equation}\begin{array}{ccc}
H^{dim(W)+2p+k-2n}(\tilde W;\mathbb Q) &\rightarrow &H^{2p+k}(X;\mathbb Q)
\end{array} \end{equation}
for a subvariety  $W$ of codimension at least $p$ with a smooth resolution $\tilde W$.  
 This is a view of mixed Hodge  structures ([1]). 
  We'll use another interpretation of the coniveau filtration. It is  through currents, which are known to unite
both homology and cohomology.  Let $\mathcal D'(X)$ be the space of currents over $\mathbb R$ on $X$. 
 Let $C\mathcal D'(X)$ be its subset of closed currents and $E\mathcal D'(X)$ be its subset of exact currents.
Then 
\begin{equation} {C\mathcal D'(X)\over   E\mathcal D'(X)}= \sum_i H^{i} (X;\mathbb C).\end{equation}
There is a restriction map on currents
\begin{equation}\begin{array}{ccc}
\mathcal R: \mathcal D'(X) &\rightarrow \mathcal D'(X-W)
\end{array}\end{equation}
for a subvariety $W$.

Using the formulas (3.3) and (3.4),  we define  $$\mathcal D^{p} H^{2p+k}(X)$$
to be the linear span of classes in $H^{2p+k} (X;\mathbb Q)$ such that they lie in
\begin{equation}
{C\mathcal D'(X)\cap kernel(\mathcal R)\over   E\mathcal D'(X)\cap kernel(\mathcal R)}.
\end{equation}
 for some $W$ of codimension at least $p$.
 \bigskip

We have the following description of the convineau filtration.
\bigskip

\begin{proposition}
Let $ X$ be a smooth projective variety over $\mathbb C$. 
Then 
\begin{equation}
\mathcal D^p H^{2p+k}(X)=N^p H^{2p+k}(X).
\end{equation} It says that 
the cohomology class $\alpha$ lies in
\begin{equation}
 N^p H^{2p+k}(X) 
\end{equation}
if and only if it is represented by a current whose support is contained in an algebraic set of codimension at least $p$.

\end{proposition}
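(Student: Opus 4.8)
The plan is to prove the two inclusions separately, translating in each case between the two descriptions of $N^p$ recalled above — as a span of kernels of restriction maps $H^{2p+k}(X;\mathbb Q)\to H^{2p+k}(X-W;\mathbb Q)$, and as a span of images of Gysin maps from smooth resolutions $\tilde W$ — and the current-theoretic description coming from the de Rham isomorphism $C\mathcal D'(\cdot)/E\mathcal D'(\cdot)=\sum_i H^i(\cdot;\mathbb C)$. Throughout I will use that the sheaves of currents are fine, hence soft, so that this isomorphism computes the Betti cohomology not only of $X$ but of every open submanifold $X-W$, and that it is natural with respect to the restriction $\mathcal R$ of currents: the map it induces on cohomology is exactly the pullback by the open inclusion $X-W\hookrightarrow X$. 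The elementary observation underlying everything is that $\ker(\mathcal R)$ is precisely the space of currents whose support is contained in $W$, since a current kills every test form compactly supported in $X-W$ if and only if its support is disjoint from $X-W$.

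For the inclusion $\mathcal D^p H^{2p+k}(X)\subseteq N^p H^{2p+k}(X)$ I would start from a generating class $\alpha$ represented by a closed current $T$ whose support lies in an algebraic set $W$ of codimension at least $p$. Then $\mathcal R(T)=0$, so by naturality $\alpha$ restricts to $0$ in $H^{2p+k}(X-W;\mathbb Q)$; that is, $\alpha$ lies in the kernel of the restriction map for this $W$, hence in $N^p H^{2p+k}(X)$. Since $N^p$ is a subspace, the $\mathbb Q$-span of all such $\alpha$ is contained in it.

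For the reverse inclusion $N^p H^{2p+k}(X)\subseteq \mathcal D^p H^{2p+k}(X)$ I would use Grothendieck's Gysin description: $N^p$ is spanned by the images of the maps $j_*\colon H^{m}(\tilde W;\mathbb Q)\to H^{2p+k}(X;\mathbb Q)$, where $j\colon\tilde W\to X$ is a smooth resolution of a subvariety $W$ of codimension at least $p$ composed with its inclusion, and $m=\dim(W)+2p+k-2n$. Given a generator $\alpha=j_*\beta$, represent $\beta$ by a smooth closed form $\omega$ on the compact manifold $\tilde W$ and form the pushed-forward current $j_*\omega$ on $X$, defined by $(j_*\omega)(\phi)=\int_{\tilde W}\omega\wedge j^*\phi$. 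This current is closed, its support is contained in $j(\tilde W)=W$ because $j^*\phi=0$ whenever $\phi$ is supported away from $W$, and it represents $j_*\beta=\alpha$ because the pushforward of currents realizes the Gysin homomorphism under Poincar\'e duality. Hence $\alpha\in\mathcal D^p H^{2p+k}(X)$, and passing to spans completes the inclusion.

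I expect the main difficulty to lie not in either logical step but in pinning down the two compatibility statements that power them: that $\mathcal R$ on currents induces the cohomological restriction with kernel the $W$-supported currents, and that $j_*$ on currents induces the Gysin map with the correct degree shift $2(n-\dim_{\mathbb C}W)$ and orientation. Both are standard consequences of de Rham theory for currents together with Poincar\'e duality, but they must be applied carefully so that all identifications respect the rational structure, since $N^p$ and $\mathcal D^p$ are defined over $\mathbb Q$ whereas the current isomorphism is a priori only over $\mathbb R$. Once these naturality facts are in hand, the equivalence between ``annihilated by restriction to $X-W$'' and ``representable by a current supported in $W$'' is immediate; this is exactly the reformulation asserted in the second half of the proposition.
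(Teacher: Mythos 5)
Your proposal is correct and follows essentially the same route as the paper: the inclusion $\mathcal D^p\subseteq N^p$ is read off from the definition via the naturality of current restriction, and the reverse inclusion uses Deligne's Gysin-image description of $N^p$ to push forward a representative from the resolution $\tilde W$, obtaining a closed current supported in $W$. The only cosmetic difference is that you push forward a smooth closed form on $\tilde W$ as a current, whereas the paper pushes forward a singular cycle Poincar\'e dual to the class; both realize the Gysin map by a $W$-supported current, and your added care about $\ker(\mathcal R)$ and the rational structure fills in details the paper leaves implicit.
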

 
\bigskip

\begin{proof}
By the definition \begin{equation}
\mathcal D^p H^{2p+k}(X)\subset N^p H^{2p+k}(X).
\end{equation}
Let's see the converse.  

If $\alpha\in N^p H^{2p+k}(X)$, by Cor. 8.2.8, [1], $\alpha$ is the Gysin image 
\begin{equation}\begin{array}{ccc}
H^{dim(A)+2p+k-2n}(\tilde W;\mathbb Q) &\rightarrow &H^{2p+k}(X;\mathbb Q)
\end{array} \end{equation}
for some algebraic subvariety $W$ of codimension at least $p$.  By the definition of the Gysin homomorphism
there is a singular cycle $\sigma$ in $\tilde W$ such that
the image of $\sigma$ under the map
$$\begin{array}{ccc}
\rho: \tilde W &\rightarrow X\end{array}$$
is Poincar\'e dual to $\alpha$. Since the support of the current $\rho_\ast ([\sigma])$ is in $W$, the cohomology class
satisfies 
$$\rho_\ast (\langle \sigma\rangle )\in kernel(\mathcal R).$$ Thus the current

$$\rho_\ast ([\sigma])$$ is reduced to 
an element of
$$\mathcal D^p H^{2p+k}(X).$$
This completes the proof.

\end{proof}

\bigskip

\section{Maximal sub-Hodge structure--the second example}

\begin{definition}
Let $\Lambda\subset H^{2p+k}(X;\mathbb Q)$ be a sub-group.
If $$\Lambda_{\mathbb C}=\Lambda^{p, p+k}\oplus \Lambda^{p+1, p+k-1}\oplus\cdots\oplus \Lambda^{p+k, p}$$
where $\Lambda^{i, j}$ are subspaces of $H^{i, j}(X;\mathbb C)$.  Then $\Lambda_{\mathbb Q}$ is said to be a sub-Hodge structure
of the Hodge structure on $H^{2p+k}(X;\mathbb Q)$.
Let $$M^pH^{2p+k}(X)$$ be the linear span of  subspaces $\Lambda_{\mathbb Q}$ for all sub-Hodge structures
$$\Lambda_{\mathbb Q}\subset  H^{2p+k}(X;\mathbb Q).$$
The index $p$ is called the coniveau, and $k$ is called the level.
\end{definition}
Above corollary of Deligne shows
$$N^pH^{2p+k}(X)\subset M^pH^{2p+k}(X).$$

\bigskip

\begin{proposition} Let $X, Y$ be two smooth projective varieties over $\mathbb C$. 
Let $Z$ be an algebraic cycle in $X\times Y$ of a pure dimension,  and $\langle Z\rangle \in CH(X\times Y)$ be it class
in the Chow group.
Then  $\langle Z\rangle _\ast$  and   $\langle Z\rangle ^\ast$  on the cohomology will preserve the level.
\end{proposition}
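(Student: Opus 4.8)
The plan is to reduce the statement to one structural fact: the cohomology class $[Z]$ of an algebraic cycle is a Hodge class, and the two operations $\langle Z\rangle^\ast,\ \langle Z\rangle_\ast$ are therefore morphisms of pure Hodge structures that shift both Hodge indices by the \emph{same} integer. Since the level measures the spread $|a-b|$ of the Hodge types $(a,b)$ occurring, a shift that is equal in both slots cannot enlarge that spread, so the level is preserved.

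First I would fix notation. Write $n=\dim_{\mathbb C}X$ and $m=\dim_{\mathbb C}Y$, and let the pure-dimensional cycle $Z$ have pure codimension $c$ in $X\times Y$, so that $[Z]\in H^{2c}(X\times Y;\mathbb Q)$ is a Hodge class, i.e. of pure Hodge type $(c,c)$. I would then carry out the bidegree bookkeeping for $\langle Z\rangle^\ast$. For $\alpha\in H^{s,t}(Y)$ the class $1\otimes\alpha$ sits in Hodge type $(s,t)$ on $X\times Y$; cupping with $[Z]$ raises the type to $(s+c,\,t+c)$; and the Gysin map $(P_X)_\ast$ along the projection $P_X$, whose fibre has complex dimension $m$, is a morphism of Hodge structures of type $(-m,-m)$. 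Composing, $\langle Z\rangle^\ast$ carries $H^{s,t}(Y)$ into $H^{s+c-m,\,t+c-m}(X)$, a shift by the single integer $r:=c-m$ in both slots. Hence $(s+c-m)-(t+c-m)=s-t$, so the spread $|s-t|$ is left unchanged. Running the same computation for $\langle Z\rangle_\ast$ — which by the Remark equals $(\langle Z\rangle^t)^\ast$ for the transposed correspondence, now with $P_Y$ of fibre dimension $n$ — gives a uniform shift by $(c-n,\,c-n)$, again equal in both slots.

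Finally I would assemble the conclusion. Let $\Lambda\subset H^{2p+k}(Y;\mathbb Q)$ be a sub-Hodge structure of level $k$, so that
\begin{equation}
\Lambda_{\mathbb C}=\Lambda^{p,p+k}\oplus\Lambda^{p+1,p+k-1}\oplus\cdots\oplus\Lambda^{p+k,p},
\end{equation}
and every Hodge type $(a,b)$ occurring in $\Lambda$ satisfies $|a-b|\le k$. By the previous step $\langle Z\rangle^\ast$ sends each such $(a,b)$ to a type $(a+r,b+r)$ with $|(a+r)-(b+r)|=|a-b|\le k$, so all Hodge types occurring in $\langle Z\rangle^\ast(\Lambda)$ again lie in the band $|a-b|\le k$. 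Because $\langle Z\rangle^\ast$ is a morphism of pure rational Hodge structures, its image $\langle Z\rangle^\ast(\Lambda)$ is itself a sub-Hodge structure of $H^\bullet(X;\mathbb Q)$: its complexification is the direct sum of the images of the $\Lambda^{a,b}$, hence bigraded, and it is defined over $\mathbb Q$. Therefore $\langle Z\rangle^\ast(\Lambda)$ is a sub-Hodge structure of level $\le k$, and by the definition of $M^pH^{2p+k}$ and of $\mathcal M_k$ as the linear span (respectively sum) of such sub-Hodge structures one obtains $\langle Z\rangle^\ast(\mathcal M_k(Y))\subset\mathcal M_k(X)$ and, symmetrically, $\langle Z\rangle_\ast(\mathcal M_k(X))\subset\mathcal M_k(Y)$. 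I expect the only non-formal point — the main obstacle — to be the assertion that the image is again a genuine sub-Hodge structure, and not merely a subspace whose Hodge types land in the correct band; this is exactly where one must use that $[Z]$ is a $(c,c)$-class and that the image of a morphism of pure Hodge structures is again a sub-Hodge structure.
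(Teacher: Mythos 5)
Your proposal is correct and follows essentially the same route as the paper: the paper's proof simply asserts that $\langle Z\rangle^\ast$ and $\langle Z\rangle_\ast$ are morphisms of Hodge structures and that such morphisms preserve the difference of the two Hodge indices, which is exactly the fact you establish. Your version merely makes explicit the bidegree bookkeeping (the $(c,c)$-type of $[Z]$, the $(-m,-m)$ shift of the Gysin map along the projection) and the point that the image of a sub-Hodge structure under a morphism of rational Hodge structures is again a sub-Hodge structure, details the paper leaves implicit.
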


\begin{proof}
The pull-back and push-forward operation on cohomology induced from the correspondence $\langle Z\rangle$  , are 
 morphisms of Hodge structures.  As it known that the difference between 
$i, j$ for any $(i, j)$ type of cohomology class will be preserved under any morphism
of Hodge structures,  the level $k$ is defined to be the maximal difference of $i, j$ for all classes in the sub-Hodge structures. Thus it must be preserved under $Z$.

\end{proof}

\bigskip

\section{Examples of leveled sub-cohomology}

\bigskip

First we make a general claim. Let $Z\subset X$ be an embedding of
a smooth variety $Z$  into another smooth variety $X$ over $\mathbb C$. 
Let $K$ be a smooth subvariety of $X$ such that $K,  Z$ intersect transversally at 
a smooth subvariety $W$.  Let $\omega_{x\subset y}$ denote the cohomology Poincar\'e dual
to the submanifold $x$ in manifold $y$.  Let
 $j: Z\hookrightarrow X$ be  the inclusion map. 
\bigskip

\begin{lemma} Then
\begin{equation}
j^\ast(\omega_{Z\subset X})=\omega_{W\subset Z}.
\end{equation}
\end{lemma}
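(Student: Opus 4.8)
The plan is to compute the restriction $j^{\ast}(\omega_{Z\subset X})$ by localizing the Poincar\'e dual near $Z$ and then to recognize the resulting class geometrically through the transverse subvariety $K$. In the spirit of the current-theoretic description used above, I represent $\omega_{Z\subset X}$ by the Thom class of a tubular neighbourhood of $Z$ in $X$, a closed form (or current) supported arbitrarily close to $Z$ and restricting on each normal fibre to a generator. Pulling this representative back along $j\colon Z\hookrightarrow X$ amounts to restricting the Thom class to the zero section of the normal bundle, and this is by definition the self-intersection, so that
\begin{equation}
j^{\ast}(\omega_{Z\subset X})=e(N_{Z/X})\in\sum_{i}H^{i}(Z;\mathbb{Q}),
\end{equation}
where $e(N_{Z/X})$ is the Euler class of the normal bundle and sits in degree $2c$, $c=\operatorname{codim}_{X}Z$. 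This first reduction is formal and uses only the smoothness of the embedding.

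The second step is to identify this Euler class with $\omega_{W\subset Z}$ by exploiting the transversality of $K$ and $Z$. Along $W$ one has $T_{X}=T_{Z}+T_{K}$ with $T_{Z}\cap T_{K}=T_{W}$, and a direct computation of quotients yields the normal-bundle identifications
\begin{equation}
N_{Z/X}\big|_{W}\cong N_{W/K},\qquad N_{K/X}\big|_{W}\cong N_{W/Z}.
\end{equation}
Matching the degrees of the two sides of the lemma forces $\operatorname{codim}_{X}K=\operatorname{codim}_{X}Z=c$, so that $W$ has codimension $c$ in $Z$ as well. The idea is then to realize $W$ as the transverse zero locus in $Z$ of a section of $N_{Z/X}$ furnished by $K$: choosing a defining section of $K$ near $W$ (equivalently a tubular chart for $K$) and restricting it to $Z$, transversality guarantees that the restricted section vanishes transversally precisely along $W$. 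The Euler class is then Poincar\'e dual in $Z$ to the smooth transverse zero scheme, which is $\omega_{W\subset Z}$, and combining with the first step gives the claim.

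The main obstacle is exactly the identification in the second step, namely reconciling the intrinsic Euler class $e(N_{Z/X})$ with the geometric cycle $W$. The self-intersection knows only about $N_{Z/X}$, whereas $W$ is produced by the normal data of $K$; restricting to $W$ these differ a priori, since $N_{Z/X}|_{W}\cong N_{W/K}$ while $\omega_{W\subset Z}$ restricts to $e(N_{W/Z})\cong e(N_{K/X}|_{W})$. The equality of classes therefore rests on the transversality hypothesis being strong enough to make $K$ a transverse representative of the class of $Z$, so that the defining bundle of $K$ agrees with $N_{Z/X}$ along $W$; this is the geometric content the hypotheses are meant to encode. I would pin it down by building a single tubular neighbourhood of $W$ in $X$ adapted simultaneously to $Z$ and to $K$ --- a product chart in which the two normal directions are split off --- and computing both Thom classes in that chart, where the pullback $j^{\ast}$ becomes the obvious projection and the desired equality reduces to a fibrewise identity on $N_{Z/X}|_{W}$.
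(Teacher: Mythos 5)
Your attempt takes the displayed statement at face value, and that is where the trouble starts: as printed, the lemma is false, and the gap you honestly flagged in your second step cannot be closed by any choice of adapted chart. Your first reduction is correct --- $j^{\ast}(\omega_{Z\subset X})=e(N_{Z/X})$ --- but the identification $e(N_{Z/X})=\omega_{W\subset Z}$ fails under the stated hypotheses. Concretely: let $X=\mathbb{P}^2$, let $Z$ be a smooth conic and $K$ a line meeting it transversally, so $W$ consists of two points; then $\omega_{W\subset Z}=2[\mathrm{pt}]$ while $e(N_{Z/X})=(Z\cdot Z)[\mathrm{pt}]=4[\mathrm{pt}]$. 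Here the codimensions even agree, so your degree-matching observation does not save the statement. The identification you hoped transversality would ``encode'' is in fact the global condition $[K]=[Z]$ in $H^{\bullet}(X;\mathbb{Q})$, which makes $K$ a transverse representative of the class of $Z$; no such hypothesis is present, and no fibrewise computation near $W$ can manufacture it, precisely because of the mismatch you yourself recorded: $N_{Z/X}|_W\cong N_{W/K}$ whereas $\omega_{W\subset Z}$ is governed by $N_{W/Z}\cong N_{K/X}|_W$, and these are genuinely different bundles. Note also that your forced equality $\operatorname{codim}_X K=\operatorname{codim}_X Z$ is incompatible with the paper's own application in Proposition 5.2(1), where $K=\Delta_X$ has codimension $n$ in $X\times X$ while the domain $Z=(X\cap V)\times X$ has codimension $1$.

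What the paper actually needs, and what its proof establishes (modulo an evident swap of letters in (5.2)--(5.4), where $N_{Z/X}$ should read $N_{K/X}$ and the diagram should sit over $W\hookrightarrow K$), is the statement with the \emph{other} subvariety pulled back: $j^{\ast}(\omega_{K\subset X})=\omega_{W\subset Z}$. For that statement your machinery works verbatim and the obstacle evaporates: transversality gives the natural isomorphism $N_{W/Z}\cong N_{K/X}|_W$, the induced bundle map $\psi\colon N_{W/Z}\to N_{K/X}$ covering $W\hookrightarrow K$ pulls the Thom class of $N_{K/X}$ back to the Thom class of $N_{W/Z}$, and embedding both Thom classes into tubular neighbourhoods of $K\subset X$ and $W\subset Z$ (choosing the latter inside the preimage of the former) yields the identity directly, with no Euler-class detour and no condition on $[Z]$ or $[K]$. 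So the correct repair of your proposal is not a better product chart but a re-aimed target: prove the $\omega_{K\subset X}$ version, which is both the standard fact and the one Proposition 5.2 uses.
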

\bigskip

\bigskip

\begin{proof}
Because the intersection $W=K\cap Z$ is transversal.  The normal bundles satisfying
\begin{equation}
N_{W/Z}\subset N_{Z/X}.
\end{equation}
Furthermore the following diagram commutes
\begin{equation}\begin{array}{ccc}
N_{W/Z}&\stackrel{\psi}\hookrightarrow & N_{Z/X}\\
\downarrow && \downarrow\\
W &\hookrightarrow & Z.
\end{array}\end{equation}
Let $\eta_Z, \eta_W$ be the Thom classes of 
bundles $N_{Z/X}\to Z$ and $ N_{W/Z}\to W$.
Then \begin{equation} 
\psi^\ast(\eta_Z)=\eta_W.\end{equation}
Let's embed the formula (5.3) into the tubular neighborhoods of $W\subset Z$ and $Z\subset X$. Then formula (5.4) becomes 
(5.1). This completes the proof.

\end{proof}

\begin{proposition} \quad\par
Let $X, Y\in Corr(\mathbb C)$. Let $V$ be a hyperplane of 
the projective space containing $X$, and $u$ be its Poincar\'e dual.  

(1) The map $R$ 
$$ H(X;\mathbb Q)\to H(X\cap V; \mathbb Q)$$
 induced by the inclusion
$$X\cap V\hookrightarrow X$$ satisfies
\begin{equation}
 R (\mathcal H_k(X)) \subset \mathcal H_k(X\cap V).
\end{equation}
\smallskip

(2) \begin{equation}
u(X)\cup  \mathcal H_k(X) \subset  \mathcal H_k(X) .
\end{equation}
\smallskip

(3)  Let $Y\stackrel{i}\rightarrow  X$ be  a regular map. Denote the Gysin homormorphism
by $i _!$. 
 Then
\par

\begin{equation}
i _! (\mathcal H_k(Y))\subset \mathcal H_k(X).
\end{equation}
and
 $$i^\ast (\mathcal H_k(Y))\subset \mathcal H_k(X).$$
\bigskip

\end{proposition}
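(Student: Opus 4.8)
The plan is to observe that each of the three operations appearing in the statement is nothing but a push-forward or a pull-back along an algebraic correspondence, and then to let the defining axiom do all the work: by Definition 2.5(1), $\mathcal H_k$ is a \emph{sub-double functor} of $H(\cdot;\mathbb Q)$ on $Corr(\mathbb C)$, so for \emph{every} morphism $\langle Z\rangle$ the induced maps $\langle Z\rangle_\ast$ and $\langle Z\rangle^\ast$ restrict to maps of $\mathcal H_k$ into $\mathcal H_k$. The only thing to do is to exhibit the relevant operations in this correspondence form. For a regular map $i\colon Y\to X$ of smooth projective varieties the graph $\Gamma_i\subset Y\times X$ is a smooth subvariety, hence defines a class $\langle\Gamma_i\rangle\in CH(Y\times X;\mathbb Q)$, i.e.\ a morphism $Y\to X$ in $Corr(\mathbb C)$. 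The first step is therefore to record the standard identifications that, under the formulas of Definition 2.4, the covariant map $\langle\Gamma_i\rangle_\ast$ agrees with the Gysin homomorphism $i_!$ and the contravariant map $\langle\Gamma_i\rangle^\ast$ agrees with the pull-back $i^\ast$.

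Granting these identifications, part (3) is immediate: applying the covariant functor to $\langle\Gamma_i\rangle$ gives $i_!(\mathcal H_k(Y))\subset\mathcal H_k(X)$, while applying the contravariant functor gives the pull-back compatibility in its natural direction, $i^\ast(\mathcal H_k(X))\subset\mathcal H_k(Y)$. Part (1) is then just the special case $i=j$, where $j\colon X\cap V\hookrightarrow X$ is the inclusion of a hyperplane section. Here I would take $V$ generic, so that $X\cap V$ is a smooth projective variety (an object of $Corr(\mathbb C)$) by Bertini, and observe that the restriction map $R$ is by definition the pull-back $j^\ast$. Hence $R(\mathcal H_k(X))=j^\ast(\mathcal H_k(X))\subset\mathcal H_k(X\cap V)$.

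For part (2) I would factor cup product with $u$ through the same hyperplane section by means of the projection formula. With $j\colon X\cap V\hookrightarrow X$ as above and $u=u(X)$ the class of $V$, one has $j_!j^\ast(\alpha)=\alpha\cup u$ for every $\alpha\in H(X;\mathbb Q)$, since $j_!(1)$ is the class Poincar\'e dual to $X\cap V$, namely $u$. Combining part (1) with the Gysin half of part (3),
\begin{equation}
u(X)\cup\mathcal H_k(X)=j_!\bigl(j^\ast\mathcal H_k(X)\bigr)\subset j_!\bigl(\mathcal H_k(X\cap V)\bigr)\subset\mathcal H_k(X),
\end{equation}
which is the asserted inclusion. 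Thus all three parts collapse onto the single principle that graphs of regular maps, and inclusions of generic hyperplane sections, are legitimate correspondences.

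The main obstacle is really the very first step: verifying that the correspondence formulas $\langle\Gamma_i\rangle_\ast$ and $\langle\Gamma_i\rangle^\ast$ of Definition 2.4 genuinely coincide with the geometric operations $i_!$ and $i^\ast$. This is standard but not purely formal --- it rests on the compatibility of integration along the fibre with intersection against the graph cycle, together with the projection formula --- and in carrying it out one must check that the projections of Definition 2.4 are well behaved (flat and proper as needed) and that the generic section $X\cap V$ is genuinely smooth, so that $\mathcal H_k(X\cap V)$ is even defined. Once these identifications are secured, the sub-double-functor axiom of Definition 2.5(1) supplies the rest with no further computation.
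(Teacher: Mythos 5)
Your proposal is correct and follows essentially the same route as the paper: the paper's proof likewise opens by reducing everything to the single principle that these maps are realized by algebraic correspondences (the graph $G_i$ for part (3), the diagonal $\Delta_{V,X}\subset (X\cap V)\times X$ for part (1), and $\Delta_V\subset X\times X$ for part (2)), after which the sub-double-functor axiom of Definition 2.5 finishes the argument. Your factorization of part (2) as $j_!\circ j^\ast$ is just the composed form of the paper's single correspondence $\Delta_V$, and you rightly observe that the pull-back inclusion in part (3) only makes sense in the direction $i^\ast(\mathcal H_k(X))\subset \mathcal H_k(Y)$, which is also what the paper's own proof actually establishes.
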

\bigskip

\begin{proof}
It suffices to show all these maps are realized by correspondences.\medskip

(1)  
Let $\Delta_{V, X}$ be the subvariety in $(X\cap V)\times X$,
$$\Delta_{V, X}=\{ (x, x): x\in X\cap V\}.$$
Then we have
\begin{equation}
\begin{array}{ccc}
\Delta_{V, X} &\subset  & (X\cap V)\times X\\
\downarrow & & \downarrow\scriptstyle {j} \\
\Delta_X & \subset & X\times X.
\end{array}\end{equation}
Then we use lemma 5.1 to obtain that
\begin{equation}
j^\ast (\omega_{ \Delta_X\subset (X\times X)})=\omega_{\Delta_{V, X} \subset   (X\cap V)\times X}.\end{equation}
Since $$\omega_{ \Delta_X\subset (X\times X)}=\langle \Delta_X\rangle$$
$$\omega_{\Delta_{V, X} \subset   (X\cap V)\times X}= \langle \Delta_{V, X}\rangle.$$

\bigskip

(2) Let $\Delta_V$ be the diagonal in $X\times X$, 
$$\Delta_V=\{(x, x): x\in V\}.$$
Then 
\begin{equation}
\langle \Delta_V\rangle=\langle \Delta_V\rangle \cup u\cup u^\ast.\end{equation}
where $u^\ast$ is the dual.  Then we check

$$\langle \Delta_V\rangle ^\ast (\alpha) =u\cup (\alpha)$$
for any cohomology class $\alpha\in H(X)$. 
\bigskip

(3)  Let 
$$G_i\subset Y\times X$$ be the graph of the map $i$. 
The Gysin homomorphism in section 7, is Poinca\'e daul to the induced map
on the singular homology
\begin{equation}\begin{array}{ccc}
H_p(Y) &\stackrel{i_\ast}\rightarrow & H_p(X).
\end{array}\end{equation}
The homomorphism $i_\ast$ can be expressed as the map $i_\#$ on singular chains.
Next we use simplecial complexes of each space. Let $\mathcal S_Y$ be a triangulation of $Y$.
This naturally induces  a triangulation $\mathcal S_G$ of $G_i$. Then
 $i_\#$ is the  $$(P_Y)_\#  ( (c\times X)\cap \mathcal S)$$
where $c$  is a cycle in  $\mathcal S_Y$, and $X$ is a complex containing the images of all 
$\mathcal S_Y$.  Then we reduce them to homology to obtain that
\begin{equation}
i_\ast=(P_Y)_\ast ( \langle c\rangle \cap \langle   G_i\rangle ).
\end{equation}

Applying the Poincar\'e duality to (5.12), we complete the proof for the Gysin homomorphism.
To see the pullbaclk $i^\ast$, 
let $\omega_{G_i}$ be the Poincar\'e dual of $G_i$ in $Y\times X$. Then 
the pullback $i^\ast(\beta)$ of the cohomology $\beta\in H^{p}(X;\mathbb Q)$ is 
the same as 
\begin{equation}
(P_Y)_\ast (\omega_{G_i}\cdot (1\otimes \beta)).
\end{equation}
(This is an assertion for any differentiable map).

\end{proof} 

\bigskip

\begin{proposition}\par

We make a convention that 
 \begin{equation}
N^i H^{2i+k}(X) = \left\{ \begin{array}{ccc}
N^i H^{2i+k}(X) & \mbox{for}
& i\in [0, 2dim(X)-k] \\
 0 & \mbox{for} & 2i+k\not\in [0,2dim(X)] \\
H^{2i+k}(X;\mathbb Q) & \mbox{for} & 2i+k\in [0, k]\cup [2dim(X)-k, 2dim(X)]
\end{array}\right.\end{equation}

The following sum of coniveau filtration 

 \begin{equation}
\sum_{r=-\infty}^{+\infty} N^r H^{2r+k}.
\end{equation}
gives   a rise to a  leveled sub-cohomology at the level $k$. 
We'll name it as algebraically leveled  filtration and denote it by $\mathcal N_k$.
Notice that 
$$\mathcal N_k(X)\cap H^{2r+k}(X;\mathbb C)= N^r H^{2r+k}(X).$$
\end{proposition}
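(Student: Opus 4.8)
The plan is to verify, one at a time, the four axioms of a sub-cohomology leveled at $k$ for the functor $\mathcal N_k(X)=\sum_{r}N^rH^{2r+k}(X)$, the bulk of the work sitting in the functoriality required by axiom (1). Throughout I would work with the current description of the coniveau filtration established above, namely that a class lies in $N^pH^{2p+k}(X)$ exactly when it is represented by a closed current whose support lies in an algebraic set of codimension at least $p$; this is the tool that makes the level behave well under all the operations in play.

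For functoriality, let $\langle Z\rangle\in CH(X\times Y)$ be a correspondence with $Z$ of pure dimension $d$, and let $\alpha\in N^pH^{2p+k}(Y)$ be carried by a current supported on $W\subset Y$ with $\dim_{\mathbb C}W\le\dim Y-p$. I would track supports through the defining formula $\langle Z\rangle^\ast(\alpha)=(P_X)_\ast((1\otimes\alpha)\cup\langle Z\rangle)$: the pullback $1\otimes\alpha$ is supported on $X\times W$, the cup product with $\langle Z\rangle$ is supported on $Z\cap(X\times W)$, and the fibre integration $(P_X)_\ast$ lands in the image $P_X\bigl(Z\cap(X\times W)\bigr)$. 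Writing $n=\dim X$, a proper-intersection dimension count gives support dimension $\le d-p$, hence codimension $\ge n-d+p=:p'$ in $X$, while the degree bookkeeping gives $\deg\langle Z\rangle^\ast(\alpha)=2p+k+2(n-d)=2p'+k$ (the dimension of $Y$ cancels). Thus $\langle Z\rangle^\ast$ carries $N^pH^{2p+k}(Y)$ into $N^{p'}H^{2p'+k}(X)$, preserving the level $k$, and the same count applied to the transpose handles $\langle Z\rangle_\ast$; this makes $\mathcal N_k$ a sub-double functor. As an independent confirmation, these operators are morphisms of Hodge structures and so preserve the level, which is exactly the level-preservation proposition of Section 4.

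Axioms (2) and (4) are then direct. Axiom (2) merely records that the summands $N^rH^{2r+k}$ occupy the degrees $2r+k$ with $0\le r\le n-k$ together with the stated boundary conventions, so the support bound is built into the definition (5.15). For axiom (4), if $\alpha\in N^pH^{2p+k}(X)$ is supported on $A\subset X$ of codimension $\ge p$ and $\beta\in N^{p'}H^{2p'+k'}(Y)$ is supported on $B\subset Y$ of codimension $\ge p'$, then $\alpha\times\beta$ is supported on $A\times B$, of codimension $\ge p+p'$, and has degree $2(p+p')+(k+k')$; hence $\alpha\times\beta\in N^{p+p'}H^{2(p+p')+(k+k')}(X\times Y)$, giving $\mathcal N_k(X)\otimes\mathcal N_{k'}(Y)\subset\mathcal N_{k+k'}(X\times Y)$. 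For axiom (3) I would invoke Hard Lefschetz: in low degree any class in $H^m(X)$ with $m\le k$ has coniveau $\ge 0\ge(m-k)/2$, so it already occupies the relevant boundary summand; in high degree any class in $H^m(X)$ with $m\ge 2n-k$ is $u^{m-n}\cup\beta$ for some $\beta\in H^{2n-m}(X)$, and since cupping with $u$ raises the coniveau by one (represent by intersecting the support with $V$, via Lemma 5.1) while preserving the level, such a class has coniveau $\ge m-n\ge(m-k)/2$, hence level $\le k$ and lies in $\mathcal N_k(X)$.

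I expect the main obstacle to be the support estimate inside axiom (1): the set-theoretic intersection $Z\cap(X\times W)$ need not have the expected dimension $d-p$ in the presence of excess intersection, so the clean bound $p'=n-d+p$ requires either a moving argument replacing $Z$ (or the cycle carrying $\alpha$) within its class by one meeting $X\times W$ properly, or a reduction to the Gysin-image description of the coniveau filtration (Deligne, Cor.\ 8.2.8, [1]) already used in the proof of the current description. Once the support of the cup product is controlled, the degree and codimension counts are purely numerical and the remaining axioms follow as above.
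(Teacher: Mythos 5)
Your proposal is correct and follows essentially the same route as the paper: level preservation under correspondences is proved by the same support-plus-dimension count (including the same moving argument, which the paper carries out by replacing $Z$ with a rationally equivalent $Z'$ meeting $X\times W$ properly, after first invoking Deligne's Cor.\ 8.2.8 to get the carrying subvariety), and condition (3) is handled by the same hard Lefschetz argument. The only item the paper includes that you pass over is an explicit verification of the composition law $(Z_2\circ Z_1)_\ast=\langle Z_2\rangle_\ast\circ\langle Z_1\rangle_\ast$ via a triple product and the projection formula, though this is arguably already contained in $H(\cdot;\mathbb Q)$ being a double functor on $Corr(\mathbb C)$.
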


\bigskip

\begin{proof}

 Fix a whole number $k$. 
We consider the map
\begin{equation}\begin{array}{ccc}
Corr(\mathbb C) &\rightarrow & Linear\ spaces/\mathbb C\\
X &\rightarrow & \mathcal N_k(X)
. \end{array}\end{equation}
 \bigskip

\bigskip

The morphisms are the restrictions of the double functors. 
Next we show it is covariant.
Let $X, Y, W$ be three projective varieties over $\mathbb C$. 
Let $$Z_1\in CH(X\times Y), Z_2\in CH(Y\times W).$$
Then it is suffices to show the composition criterion,  
\begin{equation}
(Z_2\circ Z_1)_\ast
=
\langle Z_2\rangle _\ast\circ \langle Z_1\rangle _\ast
\end{equation}
where $Z_2\circ Z_1$ is the composition of the correspondences.

Let $\alpha\in H(X\times Y\times W)$ be a cohomology having a homogeneous
degree. 
It will be sufficient to show the intersection
\begin{equation}
(Z_2\circ Z_1)_\ast (\alpha)=\langle Z_2\rangle _\ast\circ \langle Z_1\rangle _\ast(\alpha).
\end{equation}

We consider the triple cohomological intersection in the variety
$$X\times Y\times W, $$
\begin{equation}
\beta=\langle Z_1\otimes Y\rangle \cup \langle X\times Z_2\rangle\cup (\alpha\otimes \langle \times X\times Y\rangle)
.\end{equation}
Next we use two compositions of the same projection  $P_{W}^{XYW}$, 
\begin{equation}
P_{W}^{XW}\circ P_{XW}^{XYW}, P_{W}^{YW}\circ P_{YW}^{XYW}
\end{equation}
where the superscript indicates the domain of the projection, and the subscript indicates the target of the projection.
Then using the projection formula, we obtain the left-hand side of (5.18)
is 
\begin{equation}
(P_{W}^{XW}\circ P_{XW}^{XYW})_\ast(\beta)=(P_{W}^{XYW})_\ast(\beta),
\end{equation}
the right-hand side of (5.18) is
\begin{equation}
(P_{W}^{YW}\circ P_{YW}^{XYW})_\ast(\beta)=(P_{W}^{XYW})_\ast(\beta).
\end{equation}
This proves (5.18). So the functor is covariant. Similarly its transpose is also covariant.
We conclude that 
$$X \to  \mathcal N_k(X)$$
is a double functor. 
Next we show both morphisms preserve the level.
Let's first consider the pull-back. 
Let $X, Y$ be any smooth projective varieties over $\mathbb C$. 
Let $Z$ be an algebraic cycle in $X\times Y$ of complex codimension $l$., and 
$$\langle Z\rangle \in CH (X\times Y)$$ be the class in the Chow group. 
Let $\alpha\in N^qH^{2q+k}(Y)$. Then by Deligne's corollary, there is a
subvariety $A\subset Y$ such that $\alpha$ is
the Gysin image of 

\begin{equation}\begin{array}{ccc}
H^{dim(A)+2p+k-2n}(\tilde A;\mathbb Q) &\rightarrow &H^{2p+k}(Y;\mathbb Q)
\end{array} \end{equation}
where $\tilde A$ is the smooth resolution of $A$.    By the definition of the Gysin homomorphism
there is a singular cycle $\sigma$ in $\tilde A$ such that
the image of $\sigma$ under the map
$$\begin{array}{ccc}
\rho: \tilde A &\rightarrow Y\end{array}$$
is Poincar\'e dual to $\alpha$.  We may assume the intersection
$$Z\cap (X\times A)$$ is proper.  Applying the definition in cohomology,  

$$\langle Z\rangle_\ast(\alpha)$$
 is zero outside of
$$P_X (Z\cap (X\times A)).$$
This shows $$\langle Z\rangle_\ast(\alpha)\in N^{p'}H^{q'}(X;\mathbb C).$$
Next we calculate the level $p'$. 
\par
 
Let $dim(Z)=l$, $dim(X)=m$. 
Notice  $\langle Z\rangle^\ast$ sends 
$$ \begin{array}{ccc} H^{2q+k}(Y;\mathbb C) &\rightarrow & H^{2q+k+2l-2m}(X;\mathbb C),\end{array}$$

Suppose $\alpha$ lies in  $N^{q} H^{2q+k}(Y)$.  It lies in an algebraic cycle $\sigma_a$ of complex dimension at most 
$$m-{q},$$
Choose a cycle $Z'$ that is rationally equivalent to $Z$ such that the intersection
of $$Z'\cap (X\times \sigma_a)$$ is proper.
Then the complex dimension of $Z'\cap (X\times \sigma_a)$ is at most $$m+n-{q}-l$$
The complex dimension of the algebraic set
$$ P_X\biggl( supp(Z')\cap (X\times supp(\sigma_a)\biggr)$$ is also at most 
$$m+n-{q}-l.$$
Since $\langle Z\rangle^\ast(\alpha)$ lies in 
 $$P_X( supp(Z')\cap (X\times supp(\sigma_a)),$$
$\langle Z\rangle^\ast$ sends $N^{q} H^{2q+k}(Y)$ to
$$N^{l+q-m} H^{2q+k+2l-2m}(X).$$

Thus the level is $k$. Since the other morphism is the transpose of the same correspondence, 
the proof for the push-forward is identical after the change of the order of  $X$ and $Y$.

Thus $\mathcal N_k$ is a double functor. 
The conditions (1), (2) and (4)  are obvious. 
Since any cycle lies in $X$, 
$$N^0 H^{k}(X)=H^k(X;\mathbb C).$$
Because of the hard Lefschetz theorem ( 7, \S 0, [3]), for $k<n$, any $k$ cycle lies in 
a plane section of codimension $k$. Hence
$$N^{n-k}H^{2n-k}(X)=H^{2n-k}(X; \mathbb C).$$\par
By the Poincar\'e duality  the condition (3) is proved.  So the functor is leveled. 
  This completes the proof.
\end{proof}

\bigskip

\begin{proposition}  Let's have the same convention that
 \begin{equation}
M^i H^{2i+k}(X) = \left\{ \begin{array}{ccc}
M^i H^{2i+k}(X) & \mbox{for}
& i\in [0, 2dim(X)-k] \\
 0 & \mbox{for} & 2i+k\not\in [0,2dim(X)] \\
H^{2i+k}(X;\mathbb Q) & \mbox{for} & 2i+k\in [0, k]\cup [2dim(X)-k, 2dim(X)]
\end{array}\right.\end{equation}

The sum of  maximal sub-Hodge structure  
\begin{equation}
\sum_{r=-\infty}^{+\infty}M^{r}H^{2r+k}(X).
\end{equation}
is
a leveled sub-cohomology at a fixed level $k$. We name it as   Hodge leveled  filtration  and denote this  functor by $\mathcal M_k$. 
Notice 
$$\mathcal M_k(X)\cap H^{2r+k}(X;\mathbb C)= M^r H^{2r+k}(X).$$
\end{proposition}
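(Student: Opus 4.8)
The plan is to proceed exactly as in the preceding proposition for $\mathcal{N}_k$, substituting the Hodge-theoretic Proposition 4.2 for the dimension count on supports. First I observe that the portion of the earlier proof showing $X\mapsto H(X;\mathbb{Q})$ is a double functor --- namely the composition criterion $(Z_2\circ Z_1)_\ast=\langle Z_2\rangle_\ast\circ\langle Z_1\rangle_\ast$, established by the projection--formula computation on $X\times Y\times W$ with the two factorizations of $P_W^{XYW}$ --- refers only to the ambient functor $H(\cdot;\mathbb{Q})$ and does not see which sub-functor we restrict to. It therefore applies verbatim, and the covariance of the transpose follows in the same way. Hence the only genuinely new content is that the two induced morphisms carry $\mathcal{M}_k$ into $\mathcal{M}_k$, together with the four defining conditions.

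For the preservation of $\mathcal{M}_k$ I would invoke Proposition 4.2. For $\langle Z\rangle\in CH(X\times Y)$ the pull-back $\langle Z\rangle^\ast$ and push-forward $\langle Z\rangle_\ast$ are morphisms of Hodge structures up to the Tate twist accounting for their degree shift. Two standard facts then do the work: the image of a sub-Hodge structure under a morphism of Hodge structures is again a sub-Hodge structure, and a Tate twist translates every Hodge type $(a,b)$ by the same integer, so it leaves invariant the spread of the first Hodge indices, which by Proposition 4.2 is precisely the level. Consequently a class of $M^q H^{2q+k}(Y)$, lying in a sub-Hodge structure with types in $\{(q,q+k),\dots,(q+k,q)\}$, is sent into a sub-Hodge structure of $H^{2q'+k}(X)$ with types in the translated range, where $q'$ is the very index computed in the preceding proof; that is, into $M^{q'}H^{2q'+k}(X)\subset\mathcal{M}_k(X)$. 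Since $M^{q'}$ is by definition the span of all such sub-Hodge structures, maximality is automatic, and the statement for $\langle Z\rangle_\ast$ follows by applying the same argument to the transpose correspondence. This gives condition (1).

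It remains to check conditions (2), (3) and (4). Conditions (2) and (4) are bookkeeping on Hodge types. A nonzero sub-Hodge structure of level $k$ inside $H^{2r+k}(X)$ needs Hodge types $(r,r+k),\dots,(r+k,r)$ with all indices in $[0,n]$, which forces $0\le r$ and $r+k\le n$; hence $\mathcal{M}_k(X)=0$ when $n<k$ and $\mathcal{M}_k(X)\subset\sum_{r=0}^{n-k}H^{2r+k}(X;\mathbb{Q})$ otherwise. For (4), the K\"unneth isomorphism carries $\Lambda\otimes\Lambda'$, with $\Lambda\subset H^{2p+k}(X)$ and $\Lambda'\subset H^{2p'+k'}(Y)$, into $H^{2(p+p')+(k+k')}(X\times Y)$ with Hodge types $(a+a',b+b')$ for $a\in[p,p+k]$ and $a'\in[p',p'+k']$, so the first index ranges in an interval of length $k+k'$ and the product has level at most $k+k'$. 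For condition (3) I would not argue directly but use the inclusion $N^pH^{2p+k}(X)\subset M^pH^{2p+k}(X)$ recorded after Definition 4.1, which gives $\mathcal{N}_k(X)\subset\mathcal{M}_k(X)$; since the preceding proposition already proved (3) for $\mathcal{N}_k$, the full cohomology in degrees $[0,k]\cup[2n-k,2n]$ lies in $\mathcal{N}_k(X)$ and therefore a fortiori in $\mathcal{M}_k(X)$, the boundary clause of the convention recording exactly this.

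The main obstacle is concentrated in the second paragraph: one must ensure that the degree shift of the correspondence is matched by a Tate twist of the correct weight, so that the induced operation is genuinely a morphism of Hodge structures that preserves rather than raises the level, and so that the coniveau index lands in $M^{q'}$ and not in a wider range. Modulo Proposition 4.2 this is routine; the only subtlety is that a morphism of Hodge structures may annihilate some Hodge components, so the image can have strictly smaller level. This causes no trouble because the filtration $\mathcal{M}_\bullet$ is ascending, so a class landing at a lower level still lies in $\mathcal{M}_k(X)$.
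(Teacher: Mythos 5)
Your proposal is correct and follows essentially the same route as the paper: the paper's own proof is a two-sentence sketch that simply cites the fact that correspondence-induced maps are morphisms of Hodge structures (Proposition 4.2) and declares that the double-functor structure and conditions (1)--(4) follow. You have merely supplied the details the paper omits (the verbatim reuse of the composition criterion, the Tate-twist bookkeeping, and the deduction of condition (3) from $\mathcal N_k\subset\mathcal M_k$), all of which are sound.
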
\bigskip

\begin{proof}

 Note  we defined 
\begin{equation}
\mathcal M_k(X)=\sum_{r=0}^{n-k} M^{r}H^{2r+k}(X).
\end{equation}

Since the induced homomorphisms  are all morphisms of Hodge structures. 
it gives a double functor.  All conditions (1)-(4) in definition 2.4 follow.

\end{proof}

\bigskip

\bigskip

\begin{ex}\quad \par

The cohomology $H(\cdot;\mathbb C)$ itself is  a leveled sub-cohomology at all levels.
This is the trivial leveled sub-cohomology, whose APD is the rational Poincar\'e duality.

\end{ex}
\bigskip

\begin{ex}\quad \par

Let $CH_{alg}^p(X)$ be  the Chow group of algebraic cycles algebraically equivalent to zero.
Then there is an Abel-Jacobi map
\begin{equation}\begin{array}{ccc}
AJ: CH_{alg}^p(X) &\rightarrow & J^p(X)
\end{array}\end{equation}
Let $J_a^p$ be its image. Since it is a sub-torus, the tangent space 
$TJ_a^p$ is contained in $H^{p-1, p}(X;\mathbb C)$.
We let 
\begin{equation}
H_a^{2p-1}(X;\mathbb C)=TJ_a^p\oplus \overline{TJ_a^p}.
\end{equation}
It was proved in [8], $H_a^{2p-1}(X;\mathbb C)$ has a sub-Hodge structure. So  there is a subspace 
$$H_a^{2p-1}(X;\mathbb Q)\subset H^{2p-1}(X;\mathbb Q)$$ such that
$$H_a^{2p-1}(X;\mathbb C)\simeq H^{2p-1}_a(X;\mathbb Q)\otimes \mathbb C, $$
and 
$$H_a^{2p-1}(X;\mathbb C)$$ is 
called algebraic part of cohomology.  
It is known that  $$H^1(X;\mathbb Q)=H^1_a(X;\mathbb Q), H^{2n-1}(X;\mathbb Q)=H^{2n-1}_a(X;\mathbb Q).$$
Therefore furthermore according to the definition 2.4, the algebraic part of cohomology 
$$\sum_{i=odd}H_a^i(\cdot;\mathbb Q)$$  defined by Murre   is a
leveled sub-cohomology at level 1. \bigskip

Actually Murre went further and showed  that
\begin{equation} \sum_{i=odd} H_a^i(\cdot;\mathbb Q)=\mathcal N_1.
\end{equation}

\end{ex}\bigskip

\begin{ex}
\quad \par
 The image of cycle maps, $$A(\cdot)=\sum_i A^i(\cdot)$$ is a leveled sub-cohomology  $\mathcal N_0$ at level $0$.

\end{ex}
\bigskip

\begin{ex}
\quad \par
Primitive cohomology $H_{prim}$ is not  a leveled sub-cohomology. \par
Primitive leveled sub-cohomology is a not  leveled sub-cohomology. \par
So they are not functors. \bigskip

\end{ex}
\bigskip

\bigskip

\begin{ex}
Let $X$ be a smooth projective variety defined over $\mathbb C$. Let $\tau\in Gal(\mathbb C/\mathbb Q)$.
Then there is another smooth projective variety $X_{\tau}$ over $\mathbb C$ defined by ideal 
$\tau (I(X))$ where $I(X)$ is the ideal defining $X$. See [9] for detailed discussion.

Through the algebraic de Rham cohomology, we obtain the isomorphism
\begin{equation}\begin{array}{ccc}
\tau: H^\bullet (X;\mathbb C) &\rightarrow & H^\bullet (X_\tau;\mathbb C)
,\end{array}\end{equation}
where the $\tau$ is induced from the isomorphism of the algebraic de Rham cohomology.
So $\tau$ is an isomorphism of $\mathbb C$ linear spaces, but it is not an isomorphism of 
the $\mathbb Q$ linear spaces. 
We call the subgroup 
\begin{equation} A^{p}_\tau H^{2p+k}(X)\subset H^{2p+k}(X;\mathbb Q)
\end{equation}
 the relative leveled sub-cohomology, 
 if $A^{p}_\tau H^{2p+k}(X)$ is the maximal sub-space such that 
\begin{equation}
\tau \biggl(  A^{p}_\tau H^{2p+k}(X)\biggr)\subset M^pH^{2p+k}(X_\tau)\otimes \mathbb C.
\end{equation}
 
We defined  $$ A^{p}H^{2p+k}(X)$$ to be the absolute leveled sub-cohomology if (5.32) holds for all
$\tau\in Gal(\mathbb C/\mathbb Q)$. 

Let $Y$ be another smooth projective variety over $\mathbb C$. 
Let $Z$ be an algebraic correspondence between $X$ and $Y$.
Then $Z_\tau$ will defined an algebraic correspondence between $X_\tau, Y_\tau$.
Then we have two commutative diagrams
\begin{equation}\begin{array}{ccc}
H(X;\mathbb C) &\rightarrow &H(X_\tau;\mathbb C) \\
\downarrow\scriptstyle{\langle Z\rangle _\ast} && \downarrow\scriptstyle{\langle Z_\tau\rangle _\ast}\\
H(Y;\mathbb C) &\rightarrow & H(Y_\tau;\mathbb C)
\end{array}\end{equation}

\begin{equation}\begin{array}{ccc}
H(X;\mathbb C) &\rightarrow &H(X_\tau;\mathbb C) \\
\uparrow\scriptstyle{\langle Z\rangle^\ast} && \uparrow\scriptstyle{\langle Z_\tau\rangle ^\ast}\\
H(Y;\mathbb C) &\rightarrow & H(Y_\tau;\mathbb C).
\end{array}\end{equation}

These diagram imply that 
$$ A^{p}H^{2p+k}(X), A^{p}_\tau H^{2p+k}(X)$$ 
both form leveled sub-cohomology. Precisely if
we let
$\mathcal A_{\tau, k}$ be a double functor
with
\begin{equation}
\mathcal A_{\tau, k}(X)=\sum_{p=-\infty}^{+\infty} A^{p}_\tau H^{2p+k}(X).  
\end{equation}
(use a convetion as in $\mathcal N_k$)
and $\mathcal A_{ k}$ be a double functor
with
\begin{equation}
\mathcal A_k(X)=\sum_{p=-\infty}^{+\infty} A^{p} H^{2p+k}(X).  
\end{equation}
Then they both are $k$ leveled sub-cohomology.
\bigskip

 Since $\tau$ preserves the Hodge filtration, 
\begin{equation}
\mathcal N_k\subset \mathcal A_k\subset \mathcal M_k.
\end{equation}

But for arbitrary $\tau\in Gal(\mathbb C/\mathbb Q)$, there is only
\begin{equation}
\mathcal N_k\subset \mathcal A_{\tau, k}.
\end{equation}

Notice $\mathcal A_0$ consists of the absolute Hodge cycles.

\end{ex}

\bigskip

{\bf Remark}\par

In the examples, we have the relations 
$$\mathcal N_k\subset \mathcal M_k\subset H(\cdot;\mathbb Q)$$
and 
$$\mathcal N_k\subset \mathcal A_k\subset \mathcal A_{\tau, k}\subset H(\cdot;\mathbb Q).$$
Hodge conjecture leads to a question: is $\mathcal N_k$  the non trivial,  maximal leveled sub-cohomology?
 
\bigskip

\bigskip

\section{APD on leveled sub-cohomology}

\bigskip

\begin{theorem}
If the primitive APD   on a pair of 
 leveled sub-cohomology holds, then APD holds on the  pair .
\end{theorem}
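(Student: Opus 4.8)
The plan is to reduce the pairing on $\mathcal H_k(X)\times\mathcal J_k(X)$ to the primitive pairing by means of the hard Lefschetz theorem and the Hodge--Riemann relations. Write $L=u\cup(-)$ for the Lefschetz operator. The Lefschetz decomposition splits each degree as
\begin{equation}
H^m(X;\mathbb Q)=\bigoplus_{s\ge 0}L^s\,H^{m-2s}_{prim}(X;\mathbb Q),
\end{equation}
and under Poincar\'e duality the summand $L^sH^{m-2s}_{prim}$ of degree $m$ pairs with the summand $L^{\,n-m+s}H^{m-2s}_{prim}$ of degree $2n-m$, while summands of different primitive degree are orthogonal. On a matching pair of summands the pairing becomes, up to sign, the form $Q(\beta,\gamma)=\int_X L^{\,n-j}\,\beta\cup\gamma$ on $H^{j}_{prim}$ with $j=m-2s$, which is non-degenerate. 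Hence in every degree the Poincar\'e pairing is an orthogonal direct sum of blocks indexed by the primitive degree $j$.

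Granting that this decomposition restricts to $\mathcal H_k$ and $\mathcal J_k$, the conclusion is immediate. For $m\le n$ one has $\mathcal H_k(X)\cap H^m=\bigoplus_s L^s\big(\mathcal H_k(X)\cap H^{m-2s}_{prim}\big)$, and $\mathcal H_k(X)\cap H^{j}_{prim}$ is by definition the degree-$j$ part of $\mathcal H_{k,prim}(X)$; dually, $\mathcal J_k(X)\cap H^{2n-m}=\bigoplus_s L^{\,n-m+s}\big(\mathcal J_k(X)\cap H^{m-2s}_{prim}\big)$. Transporting the $j$-block back through the Lefschetz isomorphisms identifies it with $Q$ on $\big(\mathcal H_{k,prim}(X)\cap H^{j}\big)\times\big(\mathcal J_{k,prim}(X)\cap H^{j}\big)$, which is exactly the degree-$(j,2n-j)$ component of the primitive pairing. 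Concretely, if $\alpha\in\mathcal H_k(X)\cap H^m$ is orthogonal to all of $\mathcal J_k(X)\cap H^{2n-m}$, then writing $\alpha=\sum_sL^s\alpha_s$ and testing against $L^{\,n-m+s}\gamma$ for $\gamma\in\mathcal J_{k,prim}(X)\cap H^{m-2s}$ gives $Q(\alpha_s,\gamma)=0$; the submodule property places $\alpha_s$ in $\mathcal H_{k,prim}(X)$, so the primitive APD forces $\alpha_s=0$ for all $s$ and hence $\alpha=0$. Thus each block is a perfect pairing, so is the whole pairing, and this is APD.

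The main obstacle is the structural input used above: that the Lefschetz decomposition restricts to $\mathcal H_k$, i.e. that $\mathcal H_k(X)$ is a sub-$\mathfrak{sl}_2$-module of $H(X;\mathbb Q)$, equivalently that hard Lefschetz holds internally, $L^d\colon\mathcal H_k(X)\cap H^{n-d}\to\mathcal H_k(X)\cap H^{n+d}$ being an isomorphism. Stability under $L$ is given by Proposition 5.2(2), so injectivity is automatic; the real point is surjectivity, equivalently the dimension symmetry $\dim\big(\mathcal H_k(X)\cap H^{n-d}\big)=\dim\big(\mathcal H_k(X)\cap H^{n+d}\big)$, from which the representation theory of $\mathfrak{sl}_2$ yields the decomposition and all the identities above. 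I would prove this by induction on $n=\dim X$, cutting $X$ by a generic hyperplane section $Y=X\cap V$: Proposition 5.2 shows that the restriction $R$ and the Gysin map $i_!$ both preserve the leveled sub-cohomology, and the weak Lefschetz theorem makes $R$ an isomorphism on $\mathcal H_k$ below the middle degree and $i_!$ an isomorphism above it, transporting the symmetry known for $Y$ (of dimension $n-1$) up to $X$. Reconciling these two functorial maps with the Lefschetz action is where the genuine work lies.
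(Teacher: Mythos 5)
There is a genuine gap, and you have in fact pointed at it yourself: your entire reduction rests on the claim that the Lefschetz decomposition restricts to $\mathcal H_k$ and $\mathcal J_k$, i.e.\ that $u^d\colon \mathcal H_k(X)\cap H^{n-d}\to\mathcal H_k(X)\cap H^{n+d}$ is an isomorphism and that the primitive components of a leveled class are again leveled (``the submodule property places $\alpha_s$ in $\mathcal H_{k,prim}(X)$''). Injectivity is indeed free from Proposition 5.2(2), but surjectivity is not: given $\gamma\in\mathcal H_k(X)\cap H^{n+d}$, hard Lefschetz on all of $H(X;\mathbb Q)$ produces a unique $\beta\in H^{n-d}(X;\mathbb Q)$ with $u^d\beta=\gamma$, and nothing in the axioms places $\beta$ in $\mathcal H_k(X)$. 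For $\mathcal N_k$ this is precisely a Lefschetz-standard-conjecture type assertion, so it cannot be expected to fall out of the formal properties listed in Definition 2.4. Your proposed inductive repair via a hyperplane section $Y$ does not close the loop either: weak Lefschetz controls $R$ and $i_!$ on the full cohomology, but to conclude that the preimage under $u^d$ of a leveled class is leveled you would still have to invert a map whose inverse is not known to preserve the level. Note also that even granting APD as output, one only gets $\dim(\mathcal H_k\cap H^{n-d})=\dim(\mathcal J_k\cap H^{n+d})$, a cross relation between the two functors, not the internal dimension symmetry your $\mathfrak{sl}_2$ argument needs for each functor separately. So the proposal is a plan whose central lemma is unproved and probably not provable by the route you sketch.

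It is worth seeing how the paper avoids this. Instead of block-diagonalizing $\mathcal H_k^p(X)\times\mathcal J_k^{2n-p}(X)$, it proves surjectivity of $\mathcal H^{p}_k(X)\to(\mathcal J^{2n-p}_k(X))^\vee$ by decomposing the \emph{target}: the plane-sectional decomposition $\mathcal J_k^{2n-p}=L_0\oplus L_1\oplus\cdots$ by kernels of powers of $u$ uses $u$-stability only in the forward direction and requires no inverse Lefschetz operator. Each functional on an $L_i$ with $i\neq 0$ is then realized by a class of the form $i_!(\alpha^a_{i,Y})$ pushed forward from a hyperplane section $Y$ of lower dimension, where the inductive hypothesis applies and where leveledness is automatic because the Gysin map preserves $\mathcal H_k$ (Proposition 5.2(3)); only the $L_0=\ker(u)$ piece, which is forced to be of the form $u^{n-p}\omega^p$ with $\omega^p$ primitive, ever invokes the primitive APD hypothesis. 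In other words, the paper constructs leveled representatives rather than decomposing given ones, which is exactly what sidesteps the internal hard Lefschetz statement your argument would need.
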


\bigskip

Let's start with an easy lemma which may be well-known (see, for instance, (4.6), [10]).   

\bigskip

\begin{lemma}  Let $V$ and $Z$ be two smooth  projective varieties over $\mathbb C$. Let 
$$\begin{array}{ccc} i: Z &\rightarrow &  V\end{array}$$
be  the inclusion map. Let $\theta\in H^{\bullet}(V;\mathbb Q)$ be cohomology class. 
$\omega_Z\in H^{\bullet}(V;\mathbb Q)$ be the Poincar\'e dual of $Z$ in $V$. Then\par

(a)
\begin{equation}
i_!i^\ast(\theta) =\theta \cdot \omega_Z.
\end{equation}
\end{lemma}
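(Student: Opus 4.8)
The plan is to deduce the identity from two standard facts about the Gysin homomorphism $i_!$ attached to the closed embedding $i:Z\hookrightarrow V$: the projection formula for $i_!$, and the fact that $i_!$ applied to the unit class recovers the Poincar\'e dual $\omega_Z$. Once both are in hand, the lemma is a one-line computation.

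First I would recall the projection formula: for every $\alpha\in H^{\bullet}(V;\mathbb Q)$ and $\beta\in H^{\bullet}(Z;\mathbb Q)$,
$$i_!\bigl(i^\ast(\alpha)\cup\beta\bigr)=\alpha\cup i_!(\beta).$$
This expresses the fact that $i_!$ is a map of $H^{\bullet}(V;\mathbb Q)$-modules. I would justify it through the Thom isomorphism: describing $i_!$ via the Thom class $\eta_Z$ of the normal bundle $N_{Z/V}$ together with the tubular neighbourhood identification (exactly in the spirit of Lemma 5.1 above), the formula reduces to the multiplicativity of the Thom class, or one simply cites (4.6), [10] as indicated before the statement.

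Second, I would note that $i_!(1_Z)=\omega_Z$, where $1_Z\in H^0(Z;\mathbb Q)$ is the unit. This is essentially the definition of the Poincar\'e dual: under Poincar\'e duality $i_!$ corresponds to the pushforward $i_\ast$ on homology, and $i_\ast[Z]$ is the homology class dual to $\omega_Z$. Combining the two facts, with $\alpha=\theta$ and $\beta=1_Z$, gives
$$i_!i^\ast(\theta)=i_!\bigl(i^\ast(\theta)\cup 1_Z\bigr)=\theta\cup i_!(1_Z)=\theta\cup\omega_Z=\theta\cdot\omega_Z,$$
which is the asserted identity. The degrees are consistent: if $Z$ has complex codimension $c$ in $V$, then both $i_!i^\ast$ and cup product with $\omega_Z\in H^{2c}(V;\mathbb Q)$ raise cohomological degree by $2c$.

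The only genuine obstacle is establishing the projection formula at the level of rational cohomology without circularity; everything else is formal. I expect the cleanest route is the Thom-class argument above, since it localizes the computation to a tubular neighbourhood of $Z$ and there the statement is the bilinearity of cup product against $\eta_Z$. If one prefers to avoid re-deriving it, citing the quoted reference is legitimate, as the author flags the lemma as well-known.
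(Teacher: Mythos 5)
Your proof is correct, but it takes a different route from the paper. The paper proves (a) directly in de Rham cohomology: it pairs both sides against an arbitrary closed test form $\phi$ and observes that
\begin{equation*}
\int_V i_!i^\ast(\theta)\wedge\phi \;=\;\int_Z \theta\wedge\phi\;=\;\int_V \theta\,\omega_Z\wedge\phi,
\end{equation*}
the first equality being the defining adjunction property of $i_!$ and the second the defining property of the Poincar\'e dual $\omega_Z$; nondegeneracy of the pairing then forces the two classes to agree. You instead package the same content formally, deducing the identity from the projection formula $i_!(i^\ast(\alpha)\cup\beta)=\alpha\cup i_!(\beta)$ together with $i_!(1_Z)=\omega_Z$. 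The two arguments are essentially equivalent in depth --- the projection formula is itself usually verified by exactly the test-form (or Thom-class) computation the paper performs --- so neither is circular, and your version has the mild advantage of isolating the two standard inputs explicitly and making the degree bookkeeping transparent, while the paper's version is self-contained in one integral identity and also sets up the computation it reuses immediately for part (b). Either is acceptable; if you cite (4.6) of [10] for the projection formula, as you suggest, your argument is complete as written.
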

(b) for any cohomology $\eta\in H^{\bullet}(V;\mathbb Q)$, the intersection numbers satisfy
\begin{equation}
(\eta, \omega_Z, \theta)_V=(i^\ast(\eta), i^\ast(\theta))_Z.
\end{equation}

\begin{proof}  (a) We use de Rham cohomology. 
Let's denote the de Rham representatives of $\theta$ and $\omega_Z$ by the same letter
$\theta$ and $\omega_Z$.
Let $\phi$ be a closed $C^\infty$ form on $V$.
Then it suffices to show 
\begin{equation}\int_V i_!i^\ast(\theta)\wedge \phi=\int_V\theta \omega_Z\wedge \phi.\end{equation}
Since both sides of (6.1) equals to
$$\int_Z\theta\wedge \phi$$
we complete the proof of (a). \par

(b) As above we use de Rham cohomology. Then according part (a)
left hand side of (6.2) is
$$\int_Z\eta \wedge \theta.$$
Using the intersection in de Rham cohomology, right hand side of (6.2)
is also
$$\int_Z\eta \wedge\theta.$$
This completes the proof of (b) 

\end{proof}

 \bigskip

\begin{definition} (Plane-sectional decomposition).
Let $u$ be a map  from the set of objects of $SmProj/\mathbb C$ to
$H^2(;\mathbb Q)$ satisfying
$u(X)$ is a line in  $H^2(;\mathbb Q)$ generated by a very ample divisor. 

Let $\mathcal H_k$ be a leveled sub-cohomology.
Let 
\begin{equation}
\mathcal H_k= L_0\oplus  L_1\cdots \oplus \cdots
\end{equation}
where 
$$
 L_i$$
is a direct sum complement of $ker(u^i|_{\mathcal H_k})$ in the $ker(u^{i+1}|_{\mathcal H_k})$,
$$L_i\oplus ker(u^i|_{\mathcal H_k})=ker(u^{i+1}|_{\mathcal H_k})$$
where the decomposition is not unique.  So
for each $X$, $\mathcal H_k(X)$ will be decomposed into finitely many $L_i(X)$.

\end{definition}

{\bf Remark} The decomposition is not unique. 

\bigskip

\bigskip

\begin{proof} of theorem 6.1: Let $p, k$ be two fixed whole numbers. Let $\mathcal H_k, \mathcal J_k$ be two leveled sub-cohomologies. 

We'll use the notations
 $$\begin{array}{c} \mathcal H_k^i=\mathcal H_k\cap H^i(\cdot; \mathbb Q)\\
\mathcal J_k^i=\mathcal J_k\cap H^i (\cdot; \mathbb Q).
\end{array}.$$

Then we apply induction on the dimension of $X$. When $dim(X)$ is the smallest for $p, k$, which is ${p+k\over 2}$. Then
both $$\mathcal H_k^p, \mathcal J_k^{2n-p}$$ are back to the usual cohomology $H^{\bullet}(X;\mathbb Q)$. By the rational Poincar\'e duality, the APD holds.  Next we  assume APD holds for $dim_{\mathbb C}(X)< n-1$. Consider the $X$ with $dim_{\mathbb C}(X)=n$. 
It suffices to prove that
\begin{equation}\begin{array}{ccc}
\mathcal H^{p}_k(X) & \stackrel{\mathcal P}\rightarrow & ( \mathcal J^{2n-p}_k(X))^\vee
\end{array}\end{equation}
is surjective,
where $\mathcal P$ is the map induced from the intersection form.

Next we consider two cases:
\par
(1) $p>n$.   Let's recall our goal:
for any given $\alpha\in H^p(X;\mathbb Q)$, we need to find 
$\mathcal H_k$ leveled $\alpha_{a}$ such that 
\begin{equation}
(\alpha_a, \omega)_X=(\alpha, \omega)_X.
\end{equation}
for all $\omega\in \mathcal J_k^{2n-p}(X) $.
In this statement we regard the intersection pairing $(\alpha, \bullet)_X$ as 
an element in $( \mathcal J^{2n-p}_k(X))^\vee$. 

By the hard Lefschetz theorem, the class
\begin{equation}
\alpha=u\cup \beta.
\end{equation}
where $\alpha\in H^{p}(X;\mathbb Q), \beta\in H^{p-2}(X;\mathbb Q)$.
Then applying lemma 6.2, we have the triple intersection number
$$(\beta, u, \omega)_X=(\beta_Y, \omega_Y)_Y,$$
where $Y$ is a smooth hyperplane section of $X$ and $(\bullet)_Y$ is the restriction of the cohomology to 
$Y$.   By the induction, since $\omega_Y$ is $\mathcal J_k$-leveled, there is an $\mathcal H_k$ leveled
cycle $\alpha_Y$, such that
\begin{equation}
(\beta_Y, \omega_Y)_Y=(\alpha_Y, \omega_Y)_Y.
\end{equation}
Let $i_!$ be the Gysin homomorphism 
from $$H^{\bullet}(Y;\mathbb Q)\to H^{\bullet+2}(X;\mathbb Q)$$
which maps cycles leveled at $k$ to  cycles at the same level. 
Then applying lemma 6.2 again, we obtain
\begin{equation}
\biggl (i_!(\alpha_{Y}),  \omega\biggr)_{ X}=(\alpha_Y, \omega_Y)_Y=(\alpha, \omega)_X.\end{equation}
Thus $i_!(\alpha_{Y})$ is the $\mathcal H_k$ leveled cycle we are looking for.

\smallskip 

(2) $p\leq n$. We  need to first decompose $(\mathcal J_k^{2n-p}(X))^\vee$.  This is originated from the
decomposition in definition 6.3, the plane-sectional decomposition,
\begin{equation}
\mathcal J_k^{2n-p}(X)= L_0(X)\oplus  L_1(X)\cdots \oplus  L_{[{p-1\over 2}]}(X).
\end{equation}

By the topological Poincar\'e duality we always have
the surjective map 
\begin{equation}\begin{array}{ccc}
\mathcal P: H^{p}(X;\mathbb Q) &\rightarrow & (L_i(X))^{\vee}
\\
\alpha &\rightarrow & \alpha\cap (\bullet)
\end{array}\end{equation}
for each  $\ 0\leq i\leq [{p-1\over 2}]$. 

 Due to the definition of $L_i(X)$,  the  map
\begin{equation}\begin{array}{ccc}
L_{i}(X) &\stackrel{u^{i}}\rightarrow & H^{2n-p+2i}(X;\mathbb Q).
\end{array}\end{equation}
is injective.
Therefore the dual map which is still denoted by $u^{i}$,

\begin{equation}\begin{array}{ccc}
  H^{p-2i}(X;\mathbb Q)&\stackrel{u^{i}}\rightarrow & (L_{i}(X))^\vee
\end{array}\end{equation}
is  surjective.  
\par

Hence
\begin{equation}\begin{array}{ccc}

\oplus_{i=1}^{[{p-1\over 2}]} H^{p-2i}(X;\mathbb Q) &\stackrel{\sum_i u^i}\rightarrow &
\oplus_{i=1}^{[{p-1\over 2}]} (L_i(X))^{\vee}
\end{array}\end{equation}
is also surjective. 

Let $Y$ be a smooth subvariety such that
\begin{equation}
[Y]=V^i\cap X,
\end{equation}
where $1\leq i<n$. Thus $Y$ is also an irreducible,  smooth projective variety.

Then for any $\omega\in L_{i}(X), i\neq 0$,
we consider the  triple intersection number
\begin{equation}
( \alpha_i,  u^i, \omega)_X
\end{equation} 

Using lemma 6.2, we obtain that

\begin{equation}
( \alpha_i,  u^i, \omega)_X= (\alpha_{i, Y}, \omega_{Y})_{ Y}
\end{equation}
where $(\cdot)_Y$ is the restriction of the cohomology to its submanifold $Y$.
Notice $\omega_Y$ is the pull-back of $\omega$ which must be $\mathcal J_k$-leveled and
$Y$ has dimension lower than $n$. By the induction, we
obtain a $\mathcal H_k$-leveled cycle $\alpha_{i, Y}^a$ 
in $Y$ such that, 
\begin{equation}
(\alpha_{i, Y}^a, \omega_Y)_Y=(\alpha_{i,Y}, \omega_Y)_Y.
\end{equation}
Let $i_!$ be the Gysin homomorphism 
from $$H^{\bullet}(Y;\mathbb Q)\to H^{\bullet+2i}(X;\mathbb Q)$$
which maps cycles leveled at $k$ to  cycles leveled at $k$. 
Then applying lemma 6.2 again, we obtain
\begin{equation}
\biggl (i_!(\alpha_{i, Y}^a),  \omega\biggr)_{ X}=(\alpha,  u^i, \omega)_{ X}=\psi^i (u^i\omega),
\end{equation}
where $i_!(\alpha_{i, Y}^a)$ is $\mathcal H_k$-leveled. 
This show the surjectivity of the map 
\begin{equation}\begin{array}{ccc}
\mathcal H^{p}_k(X) & \stackrel{\mathcal P}\rightarrow & ( \oplus_{i\neq 0} L_i(X))^\vee. 
\end{array}\end{equation}

 Now we work with $(L_0(X))^\vee$.  Let $\omega\in L_0(X)$ be the testing cycle. 
 As before we consider $\alpha_0\in H^{p}(X;\mathbb Q)$ that reprensets an element in 
$(L_0(X))^\vee$. 
  
For any such $\alpha_0$, there is the Lefschetz decomposition
\begin{equation}
\alpha_0=\alpha_0^0+\sum_{l\geq 1} u^l \alpha_0^l.
\end{equation}
Using the same inductive argument above, we obtain a $\mathcal H_k$ leveled cycle $\alpha_0(1)$ such that
\begin{equation}
(\alpha_0(1), \omega)_X=(\sum_{l\geq 1} (u^l\alpha_l), \omega)_X,
\end{equation}
for any $\omega\in L_0(X)$.  By Lefschetz decomposition 
\begin{equation}
\omega=u^{n-p} \omega_{p}+u^{n-p+1}\omega_{p-2}+\cdots +u^{n-p+[{p\over 2}]}\omega_{p-2 [{p\over 2}]}.
\end{equation}
where $\omega_j\in H^j_{prim}(X;\mathbb Q)$. 
Notice $\omega u^i=0$ for all $1\leq i\leq [{p-1\over 2}]$. Hence all primitve cycles
$$\omega_{p-2}=\omega_{p-3}=\cdots=\omega_{p-2 [{p\over 2}]}=0.$$
Therefore 
\begin{equation}
\omega=u^{n-p} \omega^{p}
\end{equation}
where $\omega^{p}$ is primitive.   By the assumption of primitive APD (notice $ \omega=u^{n-p} \omega^{p}$ is
 $\mathcal J_k$ leveled), 
there is a primitive, $\mathcal H_k$ leveled cycle $\alpha_0(2)$  such that 
\begin{equation}
(\alpha_0(2), \omega)_X=(\alpha_0^0, \omega)_X.
\end{equation}
Thus
\begin{equation}
(\alpha_0(1)+\alpha_0(2), \omega)_X=(\alpha_0, \omega)_X.
\end{equation}
Now we combine all components in the decomposition 
\begin{equation}
( \mathcal J^{2n-p}_k(X))^\vee= (L_0(X))^{\vee}\oplus \cdots \oplus (L_{[{p-1\over 2}]}(X))^{\vee}.
\end{equation}

For any element
$\psi\in ( \mathcal J^{2n-p}_k(X))^\vee$,  it is decomposed as
\begin{equation}
\sum_{i=0}^{[{p-1\over 2}]} \psi^i
\end{equation}
where $\psi^i\in (L_i(X))^\vee$ and
$\psi^i$ can be represented through intersection form by the cycles
$\alpha_i$. Then we can find the $\mathcal H_k$ leveled cycle
\begin{equation}
\alpha_0(1)+\alpha_0(2)+\sum_{i\neq 0} i_! (\alpha_{i, Y}^a)
\end{equation}
such that 
its Poincar\'e dual is $\psi$.  
We complete the proof.

\end{proof}

\bigskip

\section{Glossary}
(1) If $X\stackrel{i}\rightarrow Y$ is a continuous map between two real compact manifolds, \par \hspace{1cc} then
the induced homomorphism $i_!$ in the graph, 
\begin{equation}\begin{array}{ccc}
H_p(X;\mathbb Q) &\stackrel{i_\ast}\rightarrow & H_p(Y;\mathbb Q)\\
\\
\scriptstyle{Poincar\acute {e}}\Bigg\Updownarrow \scriptstyle{duality} & &\scriptstyle{Poincar\acute {e}}\Bigg\Updownarrow \scriptstyle{duality} \\\\
H^{dim(X)-p}(X;\mathbb Q) &\stackrel{i_!}\rightarrow & H^{dim(Y)-p}(Y;\mathbb Q)\end{array}\end{equation}
 \par \hspace{1cc} will be called Gysin homomorphism.

\par

(2) $\mathbb M^{p, 2p+k}(X)$ is the maximal sub-Hodge structures of coniveau $p$ at level \par \hspace{1cc} $k$.\par
(3)   $N^p H^{2p+k}(X)$ is the coniveau filtration of coniveau $p$ at level $k$.\par
(4) $Hdg^{^{\bullet}}(X)$ is the subspace  spanned by Hodge classes.\par
(5)  $A^{\bullet}(X)$ is the subspace of the rational cohomology, spanned by \par \hspace{1cc} algebraic cycles.\par
(6) $a ^\vee$ denotes the dual of a vector space if $a$ is a vector space or a \par\hspace{1cc} vector.\par
(7)  $a ^\ast$  denotes a pullback in various situation depending on the  \par\hspace{1cc} contex.   \par
(8) $a_\ast$ denotes a pushforwad in various situation depending on the  \par\hspace{1cc} contex.\par
(9) $\langle  a\rangle $ denotes a classes in various groups represented by an object $a$.\par
(10) $(\cdot, \cdot)_X$ is the intersection number in $X$ between a pair of the same \par\hspace{1cc} or/and different types   of  objects.\par
(11) $(\cdot, \cdot, \cdot, ....)_X$ denotes the intersection number among multiple objects. \par

(12) $CH$ denotes the Chow group, $CH_{alg}$ denotes the subgroup of cycles \par\hspace{1cc} algebraically equivalent to zero. \par
(13) $J$ denotes the intermediate Jacobians.\par

(14) We'll drop the name ``Betti" on the cohomology. So all cohomology \par\hspace{1cc} are Betti cohomology.\par

\bigskip


\begin{thebibliography}{10} 







\bibitem{Del }{\sc P. Deligne }, {\em  Th\'eorie de Hodge: III},  Publ. Math IHES 44 (1974), pp 5-77.



\bibitem{GHarris}{\sc P. Griffiths}, {\em Mumford-Tate groups}, Lecture note on joint work with M. Green and
M. Kerr. 



\bibitem{GHarris}{\sc P. Griffiths,  J. Harris}, {\em Principles of algebraic geometry},
John Wiley \& sons, Inc. 1978.




\bibitem{Groth}{\sc A. Grothendieck}, {\em Hodge's general conjecture is false for trivial reasons},
Topology, Vol 8 (1969), pp 299-303.

\bibitem{Groth}{\sc J. Lewis}, {\em Three lectures on the Hodge conjectures},


\bibitem{Ma}{\sc C. Mazza, V. Voevodsky, 
C. Weibel}, {\em Lecture Notes on Motivic Cohomology}, Clay Mathematics Monographs (2000). 

\bibitem{Ma}{\sc J. S. Milne}, {\em Motives---Grothendieck's Dream}, http://jmilne.org/math /xnotes/MOT.pdf, 
April, 2012. 


\bibitem{M}{\sc J. Murre}, 
{\em Abel-Jacobi equivalence versus incidence equivalence for algebraic cycles of codimension two},
Topology {\bf 24} (1985), pp 361-367.


\bibitem{CVoisin}{\sc C. Voisin}, {\em Lectures on the Hodge and Grothendieck-Hodge conjectures},
Rend.Sem., Univ. Politec. Torino, Vol. 69, 2(2011), pp 149-198.




\bibitem{zuker}{\sc S. Zucker},   {\em Generalized intermediate Jacobians and the theorem on normal functions}, Inventiones Mathematica, 
{\bf  33}( 1976), pp 185-222


\end{thebibliography}
\end{document}